\newfont{\cyr}{wncyr10 scaled 1100}
\definecolor{purple(x11)}{rgb}{0.63,0.36,0.94}
\newtheorem{theorem}[table]{Theorem}
\newtheorem{proposition}{Proposition}[section]
\newtheorem{corollary}{Corollary}[section]
\newtheorem{example}{Example}[section]
\numberwithin{table}{section}
\newcommand{\Hp}{\mathcal{H}_p}
\newcommand{\Z}{\mathbb{Z}}
\newcommand{\Q}{\mathbb{Q}}
\newcommand{\RAC}{\MS^{\Gamma}(\mathcal{A}_{2k})}
\newcommand{\RACQ}{\MS^{\Gamma}(\mathcal{A}_{2k})^{(\bar{\Q})}}
\newcommand{\MSPoly}{\MS^{\Gamma_0(p)}(\mathcal{P}_{2k-2})}
\newcommand{\MSPolyC}{\MS^{\Gamma_0(p)}(\mathcal{P}_{2k-2}^{(\C)})}
\newcommand{\C}{\mathbb{C}}
\newcommand{\R}{\mathbb{R}}
\newcommand{\wpr}{\omega^{\prime}}
\newcommand{\Polys}{\mathcal{P}_{2k-2}}
\newcommand{\PQp}{\mathbb{P}^1(\mathbb{Q}_p)}
\newcommand{\PQ}{\mathbb{P}^1(\mathbb{Q})}
\newcommand{\T}{\mathcal{T}}
\DeclareMathOperator{\MS}{MS}
\DeclareMathOperator{\SL}{SL}
\DeclareMathOperator{\ST}{ST}
\DeclareMathOperator{\Res}{Res}
\newcommand{\Fp}{\mathcal{F}_p}
\newcommand{\tI}{w_{\infty}}
\DeclareMathOperator{\GL}{GL}
\DeclareMathOperator{\PGL}{PGL}
\newcommand{\incom}[1]{{\color{blue} { #1}}}
\newtheorem{definition}{Definition}[section]
\newtheorem{lemma}{Lemma}[section]
\theoremstyle{definition}
\newtheorem{remark}{Remark}[section]
\begin{document}

\title{A Shintani lift for rigid cocycles}

\author{Isabella Negrini}

\begin{abstract}
We construct a Shintani lift for rigid analytic cocycles of higher weight, attaching modular forms of half-integral weight to such cocycles. 
The expression for the Fourier coefficients of the modular form $\mathcal{RS}(J)$ attached to a cocycle $J$ is given in terms of the residues of $J$, and shares a striking similarity with the expression for the coefficients of the classical Shintani lift $\mathcal{S}(f)$ of an integral weight modular form $f$. This work aligns with the ideas of the nascent $p$-\emph{adic Kudla program} and strengthens the analogy between rigid cocycles and modular forms.
 \end{abstract}

\address{I.N., University of Toronto, 40 St. George Street, Room 6290
Toronto, ON M5S 2E4,
Canada}
\email{isabella.negrini@utoronto.ca}

\maketitle

\tableofcontents

\section*{Introduction}

\noindent

This work mirrors the classical theory of theta correspondences between automorphic forms in the new setting of rigid cocycles. As such, it fits into the emerging $p$-\emph{adic Kudla program} initiated in \cite{DGL}, one of whose goals is to build modular generating series from rigid cocycles.

Rigid cocycles were introduced in \cite{DV1} with the idea to extend the theory of complex multiplication to real quadratic fields. They were initially defined as elements of the parabolic cohomology $H^1_{par}(\Gamma, M)$, where $\Gamma:=\SL_2(\Z[1/p])$ and $M$ is a $\Gamma$-module of certain functions on Drinfeld's $p$-adic upper half-plane $\Hp$ (see Section \ref{Notation and preliminary definitions} for a detailed introduction on rigid cocycles). The values of rigid \emph{meromorphic} cocyles were conjectured in \cite{DV1} to be analogues of singular moduli for real quadratic fields. In \emph{loc. cit.} and other works, rigid cocycles behave like modular forms or at least modular functions. As an example, in \cite{DV2} certain rigid cocycles are seen as analogues of some modular functions with CM divisor by means of a rigid analytic Borcherds lift. In \cite{PhD}, some rigid \emph{analytic} cocycles play the role of integral weight modular forms in the construction of a rigid analytic Shimura lift. The present work strengthens this analogy between rigid cocycles and modular forms.

Our main result is as follows. Let $k\geq 0$ be an integer. Given a suitable weight $2k$ rigid cocycle $J$, we construct a modular form $\mathcal{RS}(J)$ of weight $k+1/2$ attached to $J$. In order to give a rough statement, consider the following set of binary quadratic forms 
$$
\Fp:=\{ax^2+bxy+cy^2|\; a,b,c \in \Z;\; b^2-4ac>0;\; p\nmid a, p|b, p|c    \},
$$
and let $D(Q):=b^2-4ac$.

\vspace{4mm}

\noindent
{\bf Theorem.}
{\em Given a suitable rigid cocycle $J$, we can construct a half-integral weight modular form $\mathcal{RS}(J)$ as  
   $$
    \mathcal{RS}(J):=\sum_{Q\in\Fp/\Gamma_0(p)} \hat{I}_k(J,Q) q^{D(Q)/p},
    $$
    where the coefficient $\hat{I}_k(J,Q)$ is obtained by pairing $Q$ with a certain residue polynomial of $J$. Here $q:=e^{2\pi i \tau}$ with $\tau\in\mathcal{H}$. The assignment $J \mapsto \mathcal{RS}(J)$ is Hecke-equivariant away from $p$. 
}

\vspace{4mm}

The residue polynomials mentioned in the statement will be defined in Section \ref{Rigid cocycles}, here we will limit ourselves to mention that they are $p$-adic counterparts for period polynomials of modular forms. A more precise, though still not completely accurate, statement of the theorem above will be given in Section \ref{The Shintani lift}. For the fully rigorous statement, see Theorem \ref{main thm}.
Note that $\mathcal{RS}(J)$ is a classical modular form but its coefficients are written in terms of $J$, a $p$-adic object.

The series above gives the \emph{rigid Shintani} map of the cocycle $J$, which is an analogue of the classical Shintani lift $\mathcal{S}$ of \cite{Shintani}. The map $\mathcal{S}$ associates a modular form of weight $k+1/2$ to a modular form of weight $2k$, so in our analogy rigid cocycles play the role of integral weight modular forms. Moreover, the functions $\mathcal{S}$ and $\mathcal{RS}$ have a similar construction: indeed, given $f$ of weight $2k$, the Fourier coefficients of the Shintani lift $\mathcal{S}(f)$ are given by period integrals of $f$. This analogy between residues of rigid cocycles and periods of modular forms was already exploited in \cite{PhD}, where an analogue for rigid cocycles of the Shimura lift of \cite{Shimura} was contructed by providing a theta kernel function with coefficients in rigid cocycles. The rigid analytic analogues of \cite{PhD} and of the present work behave similarly to their classical counterparts, and their constructions shed light on some parallels between the classical setting and that of rigid cocycles. In particular the coefficients of the modular form $\mathcal{RS}(J)$ share a striking similarity with the ones of the Shintani lift $\mathcal{S}(f)$. For more details on the analogy between classical and rigid analytic Shimura and Shintani lifts, see Section \ref{Classical and rigid analytic Shimura-Shintani lifts}.

Both the present work and \cite{PhD} fit into the $p$-adic Kudla program, the ideas of which were already present in \cite{DV2}, as they construct modular generating series whose coefficients arise from rigid cocycles, and use them to define maps which are counterparts to theta correspondences in this new setting.

\vspace{2mm}

\noindent
\textbf{Acknowledgements.} We are grateful to Vinayak Vatsal for many insightful conversations, and to Stephen Kudla for for his thorough review of earlier drafts of this paper, which resulted in several improvements in the presentation. We also thank Alice Pozzi for helpful comments on the exposition.
This paper is based upon work supported by the National Science
Foundation under Grant No. DMS-1928930 while the author was in
residence at the Mathematical Sciences Research Institute in Berkeley,
California, during the Spring 2023 semester.


\section{Motivation and background}
\label{Notation and preliminary definitions}
\subsection{Rigid cocycles}\label{Rigid cocycles}
Let $\Hp$ and $\Gamma$ be as defined in the Introduction, and let $k\geq 0$ be an integer. We will denote by $\mathcal{A}_{2k}$ the additive group of rigid analytic functions on $\Hp$, the Drinfeld's $p$-adic upper-half plane, with the weight $2k$ action of the group $\Gamma$ defined as
$$ f|\gamma (z) = (cz+d)^{-k} f\left(\frac{az+b}{cz+d}\right), 
\quad \mbox{ for }f\in\mathcal{A}_{2k} \:\text{ and }\: \gamma = \begin{pmatrix}a & b \\c & d \end{pmatrix}\in \Gamma.$$. For more details on $\Hp$ and rigid functions see \cite{Das}. 

\vspace{4mm}

\begin{definition}
\label{original def}
A weight $2k$ rigid analytic cocycle is an element of the $\C_p$-vector space $H^1_{par}(\Gamma,\mathcal{A}_{2k})$.
\end{definition}

\vspace{4mm}

Here $H^1_{par}(\Gamma,\mathcal{A}_{2k})\subset H^1(\Gamma,\mathcal{A}_{2k})$ is the parabolic cohomology of $\Gamma$, defined as usual (see for example \cite{Hida}).
Although the definition above agrees with the one of \cite{DV1}, in the present work we will adopt an equivalent definition given in terms of $\mathcal{A}_{2k}$-valued modular symbols. These are functions 
$$ m: \mathbb{P}_1(\Q) \times \mathbb{P}_1(\Q) \rightarrow  \mathcal{A}_{2k} $$ 
such that
$$
 m\{r,s\}=- m\{s,r\}\:\:\:\: \text{ and }\:\:\:\: m\{r,s\} + m\{s,t\} = m\{r,t\}, \qquad \mbox{ for all } r,s,t\in \mathbb{P}_1(\Q).
$$
Such a modular symbol is called $\Gamma$-invariant if
$$ 
m\{\gamma r, \gamma s\}|\gamma  = m\{r,s\}, \mbox{ for all } \gamma\in \Gamma.
$$
We will denote by $\RAC$ the $\C_p$ vector space of $\Gamma$-invariant modular symbols with values in $\mathcal{A}_{2k}$. Lemmas 1.3 and 1.9 of \cite{DV1} imply that there is an isomorphism between $H^1_{par}(\Gamma,\mathcal{A}_{2k})$ and $\RAC$, so we can give the following definition, which we will adopt for the rest of the paper, noting that it is much more explicit and convenient for computations.

\vspace{4mm}

\begin{definition}
A weight $2k$ rigid analytic cocycle is an element of the $\C_p$-vector space $\RAC$. This definition is equivalent to Definition \ref{original def}.
\end{definition}

\vspace{4mm}

We will now define a rational structure on $\RAC$ using the \emph{residues} of rigid cocycles. This is an anlogue of the rational structure on classical modular forms defined via their periods (see \cite{KZ}). We need to recall from \cite{PhD} the definition of the residue map on $\RAC$. Let $\mathcal{P}_{2k-2}$ be the homogeneous polynomials with coefficients in $\C_p$ and degree $2k$, with the action of $\Gamma$ defined as
$$
(h | \gamma ) (X,Y):=h(aX+bY,cX+dY), \:\:\:\:\:\:\text{for }h\in \Polys, \: \gamma= \begin{pmatrix}a&b\\c&d\end{pmatrix}\in \Gamma.
$$
Denote by $\MSPoly$ the space of $\Gamma_0(p)$-invariant modular symbols with values in $\Polys$. Let $\T$ be the Buhat-Tits tree of $\PGL_2(\Q_p)$. This object can be seen as a \emph{skeleton} for $\Hp$ by means of a $\PGL_2(\Q_p)$-equivariant reduction map $red: \Hp \rightarrow \T$. We will not use any property of $\T$ or the reduction map in the present work, but we will sometimes refer to the edges of $\T$. In particular, $e_0$ will denote the \emph{standard edge} of the Bruhat-Tits tree. Given $f$ a rigid function and $e$ an edge of $\T$, one can define the \emph{annular residue} $\Res_e(f)\in \C_p$ of $f$ at $e$ using the reduction map from $\Hp$ to $\T$. For more on $\T$ and its relationship with $\Hp$, see \cite{Das}.

\vspace{4mm}

\begin{definition}
\label{res map}
The residue map $\Res_0:\RAC \rightarrow \MSPoly$ is defined as $\Res_0(J):=\mu$, where
\begin{equation}
    \label{explicit res}
    \mu\{r,s\}(X,Y):=\sum_{i=0}^{2k-2}{2k-2 \choose i}(-1)^{i}\Res_{e_0} (z^{2k-2-i}J\{r,s\}(z))X^iY^{2k-i}.
\end{equation}
\end{definition}

\vspace{4mm}

Note that expression (\ref{explicit res}) will never be used in this paper and we only included it to provide some background. We are now ready to define a rational structure on $\RAC$.

\vspace{4mm}

\begin{definition}
We denote by $\RACQ$ the rigid analytic cocycles $J$ such that $\Res_0(J)$ is a modular symbol with values in polynomials with coefficients in $\bar{\Q}$. In other words,  $J\in\RACQ$ if and only if the numbers $\Res_{e_0} (z^{2k-2-i}J\{r,s\}(z))$ are in $\bar{\Q}$ for $i=0,...,2k-2$.
\end{definition}

\vspace{4mm}

Note that $\RACQ$ is not empty, as it contains the cocycles $J_{k,D}$ of \cite{PhD} (see \emph{loc. cit.}, Theorem 4.1).

\vspace{2mm}

We will now define the space of \emph{harmonic cocycles} on the edges of $\T$, which is used in some proofs of Section \ref{Hecke operators} and was used in \cite{PhD} to give an alternate decription of $\RAC$. For any $\Gamma$-module $\Omega$, the harmonic cocycles $C_{har}(\Omega)$ are $\Omega$-valued functions on the edges of $\T$ satisfying certain \emph{harmonicity conditions}. More precisely any $c \in C_{har}(\Omega)$ is such that
$$
\sum_{s(e)=v} c(e)=0,
$$
where the sum is taken over all the edges $e$ having the same vertex $v=s(e)$ as their starting point, and
$$
c(\bar{e})=-c(e),
$$
where $\bar{e}$ is the edge obtained by flipping the orientation of $e$ (i.e. by switching the starting and ending points of $e$). The action of $\Gamma$ on $C_{har}(\Omega)$ is given by
$$
(c|\gamma)(e):=c(\gamma e)|\gamma.
$$
For any edge of $\T$, the evaluation map $C_{har}(\Omega)\rightarrow \Omega$ is defined as $ev_e(c):=c(e)$. We will work with harmonic cocycles with $ \Omega=\Polys$, and more precisely with modular symbols values in such harmonic cocycles.

\vspace{4mm}

\begin{definition}
    We will denote by $\MS^{\Gamma}(C_{har}(\Polys))$ the space of $\Gamma$-invariant modular symbols with values in $C_{har}(\Polys)$. 
\end{definition}

\vspace{4mm}

It was shown in \cite{PhD} that there is an isomorphism $ST:\MS^{\Gamma}(C_{har}(\Polys))\rightarrow\RAC$, called the Schneider-Teitelbaum lift. Its inverse $\Res$ is the generalized residue map, which we do not need to define here. We will limit ourselves to show the following diagram for the sake of clarity
\begin{equation}
\label{clarity}
    \RAC \xrightarrow{Res} \MS^{\Gamma}(C_{har}(\Polys))\xrightarrow{ev_{e_0}}\MS^{\Gamma_0(p)}(\Polys),
\end{equation}
where, with a slight abuse of notation, $ev_{e_0}$ is the map induced on modular symbols by the $\Gamma_0(p)$-equivariant evaluation of harmonic cocycles at the edge $e_0$.

\vspace{4mm}

\begin{remark}
The residue map $\Res_0$ of Definition \ref{res map} can be seen as $\Res_0=ev_{e_0}\circ \Res$ (see Section 4 of \cite{PhD} for more details). This will be used in the proof of Lemma \ref{technical lemma}.
\end{remark}

\vspace{4mm}

Finally, we can consider a different action on $\Polys$, given by
$$
(h \star \gamma ) (X,Y):=h(dX-cY,-bX+aY), \:\:\:\: \text{ with }\gamma= \begin{pmatrix}a&b\\c&d\end{pmatrix}\in \Gamma.
$$
Note that $h \star \gamma=h|(\gamma^T)^{-1}$ and let $S:= \left(\begin{smallmatrix}0&1\\-1&0\end{smallmatrix}\right)$. Then $(\gamma^T)^{-1}=S^{-1}\gamma S$ and we can pass from one action to the other with the following $\Gamma$-equivariant map
\begin{eqnarray*}
    \alpha: (\Polys, |)&\rightarrow& (\Polys, \star)\\
    h &\mapsto & h|S.
\end{eqnarray*}
We consider both actions in order to be consistent with the literature: indeed \cite{Shintani} and \cite{Stevens} use $\star$ while the action used in \cite{PhD} and in the literature on rigid cocycles is the $|$ one. Consequently, the proofs involving polynomials in Section \ref{Construction of map} use the $\star$ action, while the definitions and proofs in Section \ref{Hecke operators} use the $|$ one. This is not an issue, as the map $\alpha$ is used in Section \ref{Construction of map} to phrase everything in the $\star$ notation.

As $\alpha$ is $\Gamma$-equivariant, it induces a map on $\MSPoly$, so let $\widetilde{\Res}_0:=\alpha \circ \Res_0$. We will often adopt the notation $\mu_J:=\Res_0(J)$ and $\widetilde{\mu}_J:=\widetilde{\Res}_0(J)$.

\vspace{4mm}

\subsection{Classical and rigid analytic Shimura-Shintani lifts}\label{Classical and rigid analytic Shimura-Shintani lifts}

In this Section we are going to recall some facts about the classical Shimura and Shintani lifts, and compare them to their analogues in the framework of rigid cocycles.


\subsubsection{The Shimura lift.}
\label{The Shimura lift}

Let $k$ be an even integer. Given a half-integral weight modular form $g\in S_{k+1/2}(\Gamma_0(4))$, Shimura (\cite{Shimura}) constructed an integral weight form $\mathcal{SH}(g)\in S_{2k}(\SL_2(\Z))$. The latter can be obtained by pairing $g$ with a theta kernel function $\Omega_k$ via the Petersson inner product. More precisely, $\Omega_k$ is a function of two variables $z,\tau\in\mathcal{H}$, given by
\begin{equation}
\label{shimura kernel}
    \Omega_k(z,q) := \sum_{D>0} D^{k-1/2} f_{k,D}(z) q^{ D},
\end{equation}
where $q:=e^{2\pi i \tau}$ and $f_{k,D}\in S_{2k}(\SL_2(\Z))$ are the so-called \enquote{Zagier forms} defined in \cite{Za}. The series $\Omega_k$ is a half-integral weight modular form with coefficients in $S_{2k}(\SL_2(\Z))$, meaning that $\Omega_k\in S_{k+1/2}(\Gamma_0(4))\otimes S_{2k}(\SL_2(\Z))$.  In simpler terms, the series $\Omega_k$ belongs to $S_{k+1/2}(\Gamma_0(4))$ as a function of $\tau$. Then, up to a multiplicative constant,
\begin{equation}
\label{Pet}
    \mathcal{SH}(g)=\langle g, \Omega_k \rangle_{Pet}=\int_{\Gamma_0(4)\backslash\mathcal{H}} g(\tau)\overline{\Omega_k(-\overline{z},\tau)}v^{k-3/2}dudv, \quad \tau=u+iv.
\end{equation}
In order to compare $\mathcal{SH}$ with its rigid analytic analogue, it will be useful to give a different expression for (\ref{Pet}). Let $\{g_j\}_{j=1,...,N}$ be an orthonormal basis for $S_{k+1/2}(\Gamma_0(4))$, and let $g=\sum_{j=1}^{N}\alpha_jg_j$ and $\Omega_k=\sum_{j=1}^{N}g_j\otimes f_j$ with $\alpha_j\in \C$ and $f_j\in S_{2k}(\SL_2(\Z))$. Then we can write
\begin{equation}
\label{pairing shimura}
    \mathcal{SH}(g)=\langle g, \Omega_k \rangle_{Pet}=\sum_{j=1}^{N}\alpha_jf_j.
\end{equation}
In \cite{PhD}, an analogue $\mathcal{RSH}$ of $\mathcal{SH}$ was defined in the setting of rigid cocycles. More precisely, let $k$ be odd now, then $\mathcal{RSH}: S_{k+1/2}^{(\bar{\Q})}(\Gamma(4p^2))\rightarrow \RAC$, where $S_{k+1/2}^{(\bar{\Q})}(\Gamma(4p^2))$ are the modular forms of weight $k+1/2$ and level $4p^2$ whose Fourier coefficients are in $\bar{\Q}$. The key ingredient for the construction of $\mathcal{RSH}$ is a theta kernel function, i.e. a half-integral weight modular form with coefficients in rigid cocycles. To define this, begin with the formal $q$-series
\begin{equation}
\label{Rig shimura kernel}
    \hat{\Omega}_k(q) := \sum_{D>0} D^{k-1/2} J_{k,D}q^{ D} \in \RAC\big[\big[q\big]\big],
\end{equation}
where $J_{k,D}\in\RAC$ are $p$-adic analogues of the Zagier forms (see Section 2 of \cite{PhD} for their explicit definition). Consider the natural map
$$
\Psi: S_{k+1/2}^{(\bar{\Q})}(\Gamma(4p^2))\otimes_{\bar{\Q}} \RAC \rightarrow \RAC\big[\big[q\big]\big].
$$
The main result of \cite{PhD} is that 
\begin{equation}
    \hat{\Omega}_k\in \Psi \big ( S_{k+1/2}^{(\bar{\Q})}(\Gamma(4p^2))\otimes_{\bar{\Q}} \RAC\big).
\end{equation}
For more details see Theorems 6.1, 6.2, 6.3 in \emph{loc. cit.} The preimage under $\Psi$ of the series $\hat{\Omega}_k(q)$ will be denoted by $\hat{\Omega}_k(\tau)$, where $\tau \in \mathcal{H}$.
Let now $\{g_j\}_{j=1,...,M}$ be a basis of $S_{k+1/2}^{(\bar{\Q})}(\Gamma(4p^2))$ and write 
\begin{equation}
    \hat{\Omega}_k(\tau)=\sum_{j=1}^{M}g_j(\tau)\otimes m_j
\end{equation}
with $m_j\in \RAC$. Now let $g:=\sum_{j=1}^{M}\alpha_jg_j\in S_{k+1/2}^{(\bar{\Q})}(\Gamma(4p^2))$ with $\alpha_j\in\bar{\Q}$. Then in light of (\ref{pairing shimura}) it is natural to define
\begin{equation}
\label{pairing rig shimura}
    \mathcal{RSH}(g):=\sum_{j=1}^{M}\alpha_j m_j \in \RAC.
\end{equation}
Comparing (\ref{shimura kernel}) and (\ref{Rig shimura kernel}) as well as (\ref{pairing shimura}) and (\ref{pairing rig shimura}), we see that in our analogy rigid analytic cocycles play the role of modular forms.

\subsubsection{The Shintani lift.}
\label{The Shintani lift} In \cite{Shintani} Shintani constructed a map going from integral weight modular forms to half-integral weight forms. We are now going recall the classical Shintani lift for level $p$ forms and compare it with our main result.

Let $\chi$ be the Dirichlet character modulo $4p$ defined as
\begin{equation}
\label{char}
   \chi(d):=\Big( \frac{(-1)^{k}p}{d} \Big), \;\;d\in (\Z/4p\Z)^{\times}. 
\end{equation}
Given a binary quadratic form $Q(x,y):=ax^2+bxy+cy^2$, we will adopt the notation $Q:=[a,b,c]$. The discriminant of $Q$ will be denoted by $D(Q)$. Let $\Fp$ be the set of binary quadratic forms defined as
\begin{equation}
\label{BQFs}
\Fp:=\{[a,b,c]|\; a,b,c \in \Z;\; D(Q)>0;\; p\nmid a, p|b, p|c    \}.
\end{equation}
A matrix $\left(\begin{smallmatrix}\alpha &\beta\\\gamma&\delta\end{smallmatrix}\right) \in \Gamma$ acts on $Q(x,y)$ by
$$
(Q\star\gamma)(x,y):=Q(\delta x- \gamma y, -\beta x+ \alpha y).
$$

This action is compatible with the $\star$ action on polynomials defined in Section \ref{Rigid cocycles}, and in the present section the action on $\Polys$ is the $\star$ one. 
Note that $\Gamma_0(p)$ preserves $\Fp$. The stabilizer $\Gamma_Q$ of $Q$ in $\Gamma$ is generated up to torsion by the \emph{automorph} $\gamma_Q$ of $Q$, and $\gamma_Q\in\Gamma_0(p)$ if $Q\in\Fp$ (see \cite{Shintani}). We recall now the definition of the \emph{Shintani cycle} $C_Q$ associated to $Q\in\Fp$, following \cite{Shintani}.

\begin{definition}
    Let $\omega$ be an arbitrary point in $\PQ$. The Shintani cycle associated to $Q:=[a,b,c]\in\Fp$ is the oriented geodesic $C_Q:=(r_Q,s_Q)$ in the upper half-plane, where
    $$
(r_Q,s_Q)= \left\{
\begin{array}{ll}
      (x_Q, y_Q) & \text{if } D(Q) \text{ is a square},  \\
      (\omega,\gamma_Q(\omega)) & \text{otherwise},\\
\end{array} 
\right.
$$
with
$$
(x_Q,y_Q)= \left\{
\begin{array}{ll}
       \Big(\frac{b+\sqrt{D_Q}}{2c}, \frac{b-\sqrt{D_Q}}{2c}\Big) & \text{if } c \ne 0,  \\
       \big(\infty,\frac{a}{b}\big) & \text{if } c=0,\:b>0,\\
       \big(\frac{a}{b}, \infty\big) & \text{if } c=0,\:b<0.\\
\end{array} 
\right.
$$
\end{definition}

\vspace{4mm}

\begin{theorem}(Shintani)
\label{Shintani thm}
    Let $f\in S_{2k}(\Gamma_{0}(p))$ and let $\chi$ be as in (\ref{char}).
    Let
    $$
    I_k(f,Q):=\int_{C_Q}f(\tau)Q(1,-\tau)^{k-1}d \tau, \quad\text{with } Q\in \Fp.
    $$
    Then 
    $$
    \mathcal{S}(f):=\sum_{Q\in\Fp/\Gamma_0(p)}I_k(f,Q)q^{D(Q)/p}
    $$
    is a cusp form in $S_{k+1/2}(\Gamma_{0}(4p),\chi)$. Here $q:=e^{2\pi i \tau}$ with $\tau\in\mathcal{H}$. The map 
    $$
    \mathcal{S}: S_{2k}(\Gamma_{0}(p)) \rightarrow S_{k+1/2}(\Gamma_{0}(4p),\chi)
    $$
    is $\bar{\mathbb{C}}$-linear and $ \mathcal{S}(T_m(f))=T_{m^2}(\mathcal{S}(f)) $ for any $m$ coprime with $p$.
\end{theorem}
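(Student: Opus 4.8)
The plan is to realize $\mathcal{S}$ as integration against a theta kernel, dual to the realization of the Shimura lift recalled in Section~\ref{The Shimura lift}, and then to read off the Fourier coefficients by unfolding.

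First I would check that each period integral $I_k(f,Q)$ is well defined and depends only on the $\Gamma_0(p)$-class of $Q$. When $D(Q)$ is not a square, $C_Q$ is the image of a geodesic from an arbitrary base point $\omega$ to $\gamma_Q\omega$, and independence of $\omega$ holds because the differential $f(\tau)Q(1,-\tau)^{k-1}\,d\tau$ is invariant under the automorph $\gamma_Q\in\Gamma_0(p)$, using $f|\gamma_Q=f$ together with the compatibility of the $\star$-action on $\Polys$ with the action on $\tau$; when $D(Q)$ is a square the cycle joins the two rational roots and convergence relies on $f$ being cuspidal. The identity $I_k(f,Q\star\gamma)=I_k(f,Q)$ for $\gamma\in\Gamma_0(p)$ then makes the sum over $\Fp/\Gamma_0(p)$ well posed.

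The crux is to produce a theta kernel $\Theta_k(z,\tau)$ on $\mathcal{H}\times\mathcal{H}$, assembled as a sum over the forms in $\Fp$ (a coset in the ternary lattice of integral binary quadratic forms with the discriminant form, of signature $(2,1)$), weighted by a Gaussian in $v=\operatorname{Im}\tau$ and by $Q(z,1)^{k-1}\in\Polys$, normalized so that it transforms with weight $2k$ in $z$ for $\Gamma_0(p)$ and with weight $k+1/2$ in $\tau$ for $\Gamma_0(4p)$ with character $\chi$. The weight-$2k$ behaviour in $z$ is a polynomial-substitution check; the half-integral modularity in $\tau$ is the hard part, obtained from Poisson summation and the Weil representation attached to the lattice, which is exactly what produces the automorphy factor $\theta(\gamma)^{2k+1}$ and the quadratic character $\chi$ once the level $4p$ and the congruences $p\nmid a,\ p\mid b,\ p\mid c$ defining $\Fp$ are tracked carefully.

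I would then set $\mathcal{S}(f):=\langle f,\Theta_k(\cdot,\tau)\rangle_{Pet}$, the Petersson pairing in $z$ over $\Gamma_0(p)\backslash\mathcal{H}$. Linearity of the pairing gives $\C$-linearity of $\mathcal{S}$, and since $\Theta_k$ is a weight $k+1/2$ form of level $4p$ and character $\chi$ in $\tau$, so is the output. The content is the Fourier expansion: a Rankin--Selberg-type unfolding of the fundamental-domain integral against the theta sum collapses the $\Gamma_0(p)$-orbit and turns the integral into a sum of integrals over the Shintani cycles $C_Q$, identifying the $D(Q)/p$-th coefficient with $\sum_{Q}I_k(f,Q)$ as stated; cuspidality follows from the decay of $f$ and $\Theta_k$ at the cusps. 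Finally, for Hecke equivariance I would prove the kernel identity $T^{(z)}_m\Theta_k=T^{(\tau)}_{m^2}\Theta_k$ for $(m,p)=1$ --- a combinatorial comparison of the $T_m$-action on binary quadratic forms with the half-integral Hecke action on their discriminants --- and transfer it across the self-adjoint pairing to get $\mathcal{S}(T_mf)=T_{m^2}\mathcal{S}(f)$. The main obstacle throughout is the half-integral transformation law of $\Theta_k$ with the correct factor of automorphy, level $4p$, and character $\chi$.
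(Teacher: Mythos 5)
The paper does not prove this statement: it is quoted as Shintani's classical theorem and attributed to \cite{Shintani} (compare also \cite{Stevens}), so there is no internal proof to measure your attempt against. Your sketch is a faithful outline of Shintani's actual argument --- invariance of the differential $f(\tau)Q(1,-\tau)^{k-1}\,d\tau$ under the automorph $\gamma_Q$, a theta kernel on the signature $(2,1)$ lattice of binary quadratic forms transforming with weight $2k$ in $z$ and weight $k+1/2$, level $4p$, character $\chi$ in $\tau$, the Petersson pairing followed by unfolding to produce the cycle integrals $I_k(f,Q)$, and a kernel identity for the Hecke equivariance --- with the genuinely hard step (the half-integral transformation law of the kernel via Poisson summation and the Weil representation) correctly flagged as the crux rather than filled in.
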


\vspace{4mm}

We will now state our main result. Although more precise than the one in the Introduction, the statement below is still paraphrasing certain points. A more rigorous statement will be given in Theorem \ref{main thm}, after having introduced all the necessary notation. Let $\langle\:,\: \rangle$ be the pairing on $\mathcal{P}_{2k-2}$ defined as
$$
\langle X^iY^{2k-2-i}, X^jY^{2k-2-j}   \rangle={2k-2 \choose i}^{-1}(-1)^i \delta_{i,2k-2-j}.
$$

\vspace{4mm}

\noindent
{\bf Main Theorem.}
{\em Let $J\in\RAC$ be a suitable rigid cocycle and let $\chi$ be the character defined in (\ref{char}).  Let
    $$
    \hat{I}_k(J,Q):=\langle \widetilde{\mu}_J\{r_Q,s_Q\}(x,y),Q^{k-1}(x,y)\rangle, \quad\text{with } Q\in \Fp.
    $$Then
   $$
    \mathcal{RS}(J):=(2k-2)!\sum_{Q\in\Fp/\Gamma_0(p)} \hat{I}_k(J,Q) q^{D(Q)/p}
    $$
    is a cusp form in $S_{k+1/2}(\Gamma_{0}(4p),\chi)$. Here $q:=e^{2\pi i \tau}$ with $\tau\in\mathcal{H}$. The assignment $J \mapsto \mathcal{RS}(J)$ is Hecke-equivariant away from $p$. 
}

\vspace{4mm}

To see the similarity between $I_k(f,Q)$ and $\hat{I}_k(J,Q)$, consider the period map 
$$
f\mapsto \widetilde{per}(f):=\int_r^s f(z)(zX+Y)^{2k-2}dz,
$$
which will be further described in the next Section.

Then $I_k(f,Q)$ can be characterized as
\begin{equation}
\label{similar expression}
    I_k(f,Q)=(2k-2)!\langle (\widetilde{per}(f)\{r_{Q},s_{Q}\})(x,y),Q^{k-1}(x,y)\rangle.
\end{equation}
See for example Section 4.3 of \cite{Stevens} for more details.  Comparing (\ref{similar expression}) with $\hat{I}_k(J,Q)$ in the Main Theorem, we see that the only difference between them is that in the rigid analytic case the residues $\widetilde{\mu}_J$ of $J$ take the place of the periods of modular forms. (The factor $(2k-2)!$ is not important, as it simply arises because the pairing $\langle \:, \rangle$ that we chose is slightly different from the one used by Shintani). This parallel between periods of modular forms and residues of rigid cocycles already appeared in \cite{PhD}, where the residues of the cocycles $J_{k,D}$ of (\ref{shimura kernel}) were shown to be equal to the periods of certain modular forms related to the Zagier forms $f_{k,D}$ of (\ref{Rig shimura kernel}) (see Sections 4 and 5 of \cite{PhD}).

\subsection{Eichler-Shimura isomorphism and cuspidal cocycles}\label{Eichler-Shimura and cuspidal cocycles}
We briefly recall some facts about the Eichler-Shimura isomorphism and introduce some notation that will be needed to prove our main result in Section \ref{Construction of map}. For a similar treatment of this topic and for more details see Section 4 of \cite{Stevens}.

Consider the involution on $\MS^{\Gamma_0(p)}(\mathcal{P}_{2k-2})$ given by
$$
(m|w_{\infty})\{r,s\}:=(m\{w_{\infty}r,w_{\infty}s\})|w_{\infty},\:\:\text{with }w_{\infty}:=\begin{pmatrix}-1 & 0\\ 0 &1\end{pmatrix}.
$$
We will call \emph{even} (resp. \emph{odd}) elements of $\MSPoly$ which are in the $+1$ (resp. $-1$) eigenspace for $w_{\infty}$.
Every modular symbol in $\MSPoly$ can be written as a sum of an even and an odd modular symbol, and we have
\begin{equation}
    \label{direct sum}
\MSPoly=\MSPoly^{+}\oplus\MSPoly^-.
\end{equation}
Clearly the same definitions could be given using the $\star$ action.

We will denote by $\Polys^{(\C)}$ the homogeneous polynomials with coefficients in $\C$ and degree $2k-2$, with the $|$ action of $\Gamma_0(p)$. Recall the Eichler-Shimura isomorphism, in particular the diagram
\begin{center}
\begin{tikzcd}[column sep=large, column sep=large]
S_{2k}(\Gamma_0(p))\oplus S_{2k}(\Gamma_0(p))\arrow{r}{ES}  \arrow{rd}{per} 
  & H^1(\Gamma_0(p),\Polys^{(\C)})  \\
    & \MSPolyC \arrow{u}{\beta}
\end{tikzcd}
\end{center}

Here $ES$ is an isomorphism, $\beta$ associates to a modular symbol $\mu$ the cocycle $\Phi_{\mu}(\gamma):=\mu\{x_0,\gamma x_0\}$ for any $x_0\in\PQ$, and $per$ is given by
$$
f\mapsto per(f):=\int_r^s f(z)(X-zY)^{2k-2}dz.
$$
Moreover $per=per^+\oplus per^-$, where $per^+$ (resp. $per^-$) denotes the map given by taking even (resp. odd) periods of cusp forms. These maps land in the spaces of even and odd modular symbols, which are the eigenvalues of $w_{\infty}$ acting on $\MSPolyC$. By the Manin-Drinfeld principle there is a unique Hecke-equivariant section $s_{\beta}$ of $\beta$ such that the diagram commutes and $\MSPolyC=Ker(\beta)\oplus Im(s_{\beta})$. We have $s_{\beta}=per\circ ES^{-1}$.

If we consider $\Polys^{(\C)}$ with the $\star$ action instead, we get an analogue diagram
\begin{center}
\begin{tikzcd}[column sep=large, column sep=large]
S_{2k}(\Gamma_0(p))\oplus S_{2k}(\Gamma_0(p))\arrow{r}{\widetilde{ES}}  \arrow{rd}{\widetilde{per}} 
  & H^1(\Gamma_0(p),\Polys^{(\C)})  \\
    & \MSPolyC \arrow{u}{\beta}
\end{tikzcd}
\end{center}
Now the period map is given by 
$$
f\mapsto \widetilde{per}(f):=\int_r^s f(z)(zX+Y)^{2k-2}dz.
$$
The maps $\widetilde{ES}$ and $\widetilde{s}_{\beta}$ are defined like in the previous case. To relate the objects in \cite{PhD} with the ones of \cite{Shintani} and \cite{Stevens}, we will use the fact that $\widetilde{per}(f)=\alpha\circ per(f)$ and $\widetilde{per}^{\pm}(f)=\alpha\circ per^{\pm}(f)$.

\vspace{4mm}

If $J\in\RACQ$, then $\Res_0(J)$ can be seen as an element of $\MSPolyC$, hence the following definition makes sense.

\vspace{4mm}

\begin{definition}
    A rigid analytic cocycle $J\in\RACQ$ is called cuspidal if  $\Res_0(J)\in Im(s_{\beta})$ or equivalently if $\widetilde{\Res}_0(J)\in Im(\widetilde{s}_{\beta})$. The space of cuspidal cocycles will be denoted by $\RAC^{\text{cusp}}$.
\end{definition}

\vspace{4mm}

Note that the cocycles $J_{k,D}$ of \cite{PhD} are cuspidal as $\Res_0(J_{k,D})=per^-(f_{k,D}^{(p)})$, for certain cusp forms $f_{k,D}^{(p)}\in S_{2k}(\Gamma_0(p))$.

\vspace{4mm}

\section{Hecke operators on rigid analytic cocycles}\label{Hecke operators}

In this Section we will provide two different definitions of Hecke operators on rigid analytic cocycles: the first one will be given using the map $\Res_0:\RAC \rightarrow \MSPoly$ of \cite{PhD}, the second one will be more explicit. Proposition \ref{same def} will show that the two definitions coincide.

Let $\{\gamma_j\}_{j=0,...,p}$ be the representatives given by $\SL_2(\Z)=\bigsqcup_{j=0}^{p}\gamma_j \Gamma_0(p)$ and let $w_p:=\left(\begin{smallmatrix}0 & -1 \\p & 0 \end{smallmatrix}\right)$.

\vspace{4mm}

\begin{definition}
\label{p new}
    The space $\MSPoly^{p\text{-}new}$ is the subspace given by the modular symbols $\psi\in\MSPoly$ such that 
    $$
    \sum_{j=0}^{p}\psi|\gamma_j=0 \:\:\:\:\text{ and } \:\:\:\:\sum_{j=0}^{p}\psi|(w_p \gamma_j^{-1})=0.
    $$
\end{definition}

\vspace{4mm}

\begin{lemma}
\label{technical lemma}
    The map $\Res_0$ gives an isomorphism $\Res_0:\RAC \rightarrow \MSPoly^{p\text{-}new}$.
\end{lemma}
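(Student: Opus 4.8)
The plan is to prove the statement by analyzing the composite $\Res_0 = ev_{e_0} \circ \Res$ recorded in the remark, and matching the two "$p$-new" defining relations against structural properties of the Schneider--Teitelbaum lift and the harmonic cocycle condition. First I would recall that $\Res : \RAC \to \MS^{\Gamma}(C_{har}(\Polys))$ is an isomorphism (the Schneider--Teitelbaum lift $ST$ is its inverse), so it suffices to understand the image of the composite $ev_{e_0}$, i.e. to show that $ev_{e_0}$ carries $\MS^{\Gamma}(C_{har}(\Polys))$ isomorphically onto $\MSPoly^{p\text{-}new}$. The group $\Gamma = \SL_2(\Z[1/p])$ acts transitively on the (oriented) edges of $\T$, and the stabilizer of the standard edge $e_0$ is $\Gamma_0(p)$; hence evaluation at $e_0$ already lands in $\Gamma_0(p)$-invariant objects, and a $\Gamma$-invariant harmonic cocycle is determined by its value $c(e_0)$ together with the $\Gamma$-action, by transitivity. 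This is what makes $ev_{e_0}$ injective and gives the right target space.

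Next I would pin down the image. The two conditions defining $\MSPoly^{p\text{-}new}$ should come, respectively, from the harmonicity relation $\sum_{s(e)=v} c(e) = 0$ and from the relation $c(\bar e) = -c(e)$ under the orientation-reversing element. Concretely: the $p+1$ edges emanating from the source vertex of $e_0$ are $\{\gamma_j e_0\}_{j=0,\dots,p}$, where $\{\gamma_j\}$ are the coset representatives for $\SL_2(\Z) = \bigsqcup \gamma_j \Gamma_0(p)$; using $\Gamma$-invariance $c(\gamma_j e_0)|\gamma_j = $ (the modular symbol value at $e_0$), the harmonicity sum at that vertex translates into $\sum_{j=0}^{p} \psi|\gamma_j = 0$, which is the first relation of Definition 2.1. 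For the second relation I would identify the edge $w_p\, \bar e_0$ (equivalently work out how $w_p = \matrbis$ flips $e_0$) and feed the flip relation $c(\bar e) = -c(e)$ through $\Gamma$-invariance to obtain $\sum_{j=0}^{p} \psi|(w_p \gamma_j^{-1}) = 0$. Matching the bookkeeping of the coset representatives and the $w_p$-twist against the two harmonicity axioms is the technical heart of the argument.

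The main obstacle I expect is precisely this dictionary between the combinatorics of the Bruhat--Tits tree and the algebraic relations in Definition 2.1: one must verify that the edges $\{\gamma_j e_0\}$ are exactly the edges sharing a source vertex with $e_0$ (so that harmonicity at a single vertex suffices, given transitivity), and that the orientation-reversal is correctly implemented by $w_p$ up to an element of $\Gamma_0(p)$ that does not affect the $\Gamma_0(p)$-invariant value. I would also need to confirm that every $\psi \in \MSPoly^{p\text{-}new}$ extends to a genuine $\Gamma$-invariant harmonic cocycle-valued modular symbol — i.e. that the two relations are not merely necessary but sufficient to define $c$ consistently on all edges via $c(\gamma e_0) := \psi|\gamma^{-1}$. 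Surjectivity onto $\MSPoly^{p\text{-}new}$ thus amounts to checking that this assignment is well-defined (independent of the representative of the coset $\gamma \Gamma_0(p)$, using $\Gamma_0(p)$-invariance of $\psi$) and satisfies both harmonicity axioms globally, which follows once the two relations hold at the single orbit-representative edge $e_0$ by $\Gamma$-equivariance. Since $ev_{e_0}$ is already known to be injective and $\Res$ is an isomorphism, establishing this surjectivity completes the proof.
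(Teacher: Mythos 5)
Your proposal matches the paper's proof essentially step for step: both factor $\Res_0 = ev_{e_0}\circ\Res$, use that $\Res$ is inverse to the Schneider--Teitelbaum lift, derive the two defining relations of $\MSPoly^{p\text{-}new}$ from the harmonicity axioms of $c$ at the standard edge, and prove surjectivity by extending a given $\psi\in\MSPoly^{p\text{-}new}$ to all edges via $c\{r,s\}(\gamma e_0):=\psi\{\gamma r,\gamma s\}|\gamma$, using transitivity of $\Gamma$ on the edges of $\T$ exactly as you describe. The one small refinement worth recording is that the second relation $\sum_{j}\psi|(w_p\gamma_j^{-1})=0$ encodes harmonicity at the \emph{second} endpoint of $e_0$ (transported by $w_p$) combined with the orientation-flip axiom, rather than the flip axiom alone, but this is precisely the bookkeeping you already flag as the technical heart and it does not change the argument.
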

\begin{proof}
    Recall from (\ref{clarity}) that $\Res_0=ev_{e_0}\circ\Res$. As $\Res$ is the inverse of the Schneider-Teitelbaum lift $ST$, it must be injective. Moreover, $ev_{e_0}(c)$ belongs to $\MSPoly^{p\text{-}new}$ because of the harmonicity properties of $c$. Indeed, the conditions in Definition \ref{p new} can be rewritten as sums of $c$ evaluated at all the edges leaving the standard vertex $v_0$ of $\T$. So the only thing left to prove is that $ev_{e_0}$ is surjective on $\MSPoly^{p\text{-}new}$. Any edge $e$ of $\T$ can be written as $e=\gamma e_0$ for some $\gamma\in\Gamma$. Then, given $c_0\in\MSPoly^{p\text{-}new}$, we can define 
    $$
c\{r,s\}(e):=c_0\{\gamma r, \gamma s\}|\gamma.
    $$
    Using the fact that $c_0$ satisfies the formulas in Definition \ref{p new} one can show that $c\in \MS^\Gamma(C_{har}(\Polys))$. It is easy to check that $ev_{e_0}(c)=c_0$.
\end{proof}

\vspace{4mm}

Recall that there is an action of Hecke operators on $\MSPoly$, induced by the Hecke action on $S_{2k}(\Gamma_0(p))$ via the Eichler-Shimura isomorphism $ES$. For $J\in\RAC$ and any positive integer $m$ with $(m,p)=1$ we can now define Hecke operators $T_m$ as
\begin{equation}
\label{first def}
    T_m(J):=\Res_0^{-1}(T_m(\Res_0(J))).
\end{equation}
Similarly, let
\begin{equation}
\label{first def p}
    U_p(J):=\Res_0^{-1}(U_p(\Res_0(J))).
\end{equation}
Alternatively, we can give more concrete definitions as follows. Let $l$ be a prime different from $p$. Define the sets of matrices
\begin{equation}
\label{l matrici}
\Gamma_l:=\Big\{\begin{pmatrix}1 & a\\ 0&l
\end{pmatrix}, \: a=0,...,l-1 \Big\}\cup \Big\{ \begin{pmatrix}l & 0\\ 0&1
\end{pmatrix} \Big\}.
\end{equation}
Any $\gamma=\begin{pmatrix} a&b\\c&d\end{pmatrix}\in \Gamma_l$ acts on $f\in \mathcal{A}_{2k-2}$ by
$$
(f|\gamma)(z):=det(\gamma)^{k-1}(cz+d)^{2-2k}f(\gamma z),
$$
and on $h\in\Polys$ by
$$
(f|\gamma)(z):=det(\gamma)^{1-k}(cX+dY)^{2k-2}h(aX+bY, cX+dY).
$$
These induce actions of $\gamma$ on $\RAC$, $\MS^\Gamma(C_{har}(\Polys))$ and $\MSPoly$.

Then, we can define
\begin{equation}
\label{second def}
T_l(J):=l^{k-1}\sum_{\gamma \in\Gamma_l}J|\gamma.
\end{equation}
Note that the $T_l$'s defined in (\ref{second def}) preserve the space $\RAC$, by the same argument used to show that Hecke operators preserve the space of modular forms.
To define the operator $U_p$, consider the involution on $\RAC$ defined by 
$$
(J|w_{p})\{r,s\}:=(J\{w_{p}r,w_{p}s\})|w_{p},\:\:\text{with }w_{p}:=\begin{pmatrix}0 & -1\\ p &0\end{pmatrix},
$$
where $J\in\RAC$. Then let
\begin{equation}
\label{second def p}
    U_p(J):=-J|w_{p}.
\end{equation}

In order to prove that the definitions given in (\ref{first def}), (\ref{first def p}) and (\ref{second def}), (\ref{second def p}) coincide, we need some notation. 
As $w_p$ is an involution, we can write 
$$
\RAC= \RAC^{p,+}\oplus \RAC^{p,-},
$$
where $\RAC^{p,+}$ (resp. $\RAC^{p,-}$) is the eigenspace on which $w_{p}$ acts with eigenvalue $+1$ (resp. $-1$). The two subspaces that we just defined are called the spaces of $p$-\emph{even} and $p$-\emph{odd} rigid analytic cocycles. Similarly, as $\RAC \cong \MS^\Gamma(C_{har}(\Polys))$, we can define $\MS^\Gamma(C_{har}(\Polys))^{p,+}$ and $\MS^\Gamma(C_{har}(\Polys))^{p,-}$.

For any $c\in C_{har}(\Polys)$ and for any edge $e$ of the Bruhat-Tits tree, recall from Section \ref{Rigid cocycles}  that $ev_{e}$ is the evaluation map at $e$, i.e. $ev_e(c):=c(e)$. 

Finally, given the sign $\epsilon=+$ (resp. $\epsilon=-$), let 
$$
\MSPoly^{U_p=\epsilon}
$$ 
be the subspace of $\MSPoly$ on which $U_p$ acts as multiplication by $+1$ (resp. $-1$).

\vspace{4mm}

\begin{proposition}
    \label{translation}
    The map $ev_{e_0}$ induces Hecke-equivariant inclusions
    $$
    ev_{e_0}:\MS^\Gamma(C_{har}(\Polys))^{p, \epsilon}\hookrightarrow\MSPoly^{U_p=-\epsilon}.
    $$
\end{proposition}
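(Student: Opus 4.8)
The plan is to deduce the statement from a single fact about the Bruhat–Tits tree, namely that $w_p=\left(\begin{smallmatrix}0&-1\\p&0\end{smallmatrix}\right)$ reverses the standard edge $e_0$.

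I would first settle injectivity and the target space. By Lemma \ref{technical lemma} we have $\Res_0=ev_{e_0}\circ\Res$ with $\Res=ST^{-1}$ an isomorphism and $\Res_0$ an isomorphism onto $\MSPoly^{p\text{-}new}$; hence $ev_{e_0}$ is itself an isomorphism of $\MS^\Gamma(C_{har}(\Polys))$ onto $\MSPoly^{p\text{-}new}$, so its restriction to the eigenspace $\MS^\Gamma(C_{har}(\Polys))^{p,\epsilon}$ is injective with $p$-new image. This gives the inclusion, and it remains to locate this image in the correct $U_p$-eigenspace and to check $T_l$-equivariance for $l\neq p$.

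For the eigenvalue, observe that $w_p$ swaps the homothety classes of $\Z_p^2$ and $\Z_p\oplus p\Z_p$, hence interchanges the two endpoints of $e_0$, so that $w_pe_0=\bar e_0$. The $w_p$-involution transported from $\RAC$ reads $(c|w_p)\{r,s\}(e)=\big(c\{w_pr,w_ps\}(w_pe)\big)|w_p$ on harmonic cocycles (the natural action making $ST$ equivariant, which should be verified directly); evaluating at $e_0$ and using the harmonicity relation $c(\bar e_0)=-c(e_0)$ yields
$$
ev_{e_0}(c|w_p)=-\,ev_{e_0}(c)\,|\,w_p ,
$$
where the right-hand $|w_p$ is the analogous $w_p$-involution on $\MSPoly$. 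Thus $c|w_p=\epsilon c$ forces $ev_{e_0}(c)|w_p=-\epsilon\,ev_{e_0}(c)$, so $ev_{e_0}(c)$ lies in the $(-\epsilon)$-eigenspace of $w_p$ on $\MSPoly$. Since this image is $p$-new, I would conclude by identifying that eigenspace with $\MSPoly^{U_p=-\epsilon}$ through the standard fact that on $\MSPoly^{p\text{-}new}$ the Eichler–Shimura operator $U_p$ coincides with the Atkin–Lehner $w_p$-involution. Verifying this identification with the correct sign — that is, reconciling the $\det^{k-1}$/tree normalization of $w_p$ on cocycles with the Eichler–Shimura normalization of $U_p$ on $\MSPoly$ so that the $+1$-eigenspaces match — is the step I expect to be the main obstacle; everything else is formal.

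Finally, for a prime $l\neq p$ every matrix in $\Gamma_l$ of \eqref{l matrici} is upper-triangular modulo $p$ with determinant in $\Z_p^\times$, hence lies in the Iwahori subgroup at $p$ and therefore fixes $e_0$. Using $(c|\gamma)(e)=c(\gamma e)|\gamma$ and $T_l=l^{k-1}\sum_{\gamma\in\Gamma_l}(\cdot)|\gamma$ one then computes $ev_{e_0}(T_lc)=l^{k-1}\sum_{\gamma}c(\gamma e_0)|\gamma=l^{k-1}\sum_{\gamma}c(e_0)|\gamma=T_l\big(ev_{e_0}(c)\big)$, the matrix $T_l$ on $\MSPoly$ agreeing with the Eichler–Shimura one. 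This gives Hecke-equivariance away from $p$ and completes the plan.
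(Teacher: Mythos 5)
Your reduction has a genuine gap at the decisive step. The observation that $w_p$ reverses $e_0$, hence that $ev_{e_0}(c|w_p)=-\,ev_{e_0}(c)|w_p$, is correct and useful (it is essentially what makes the definitions (\ref{first def p}) and (\ref{second def p}) compatible), but it only places $ev_{e_0}(c)$ in the $(-\epsilon)$-eigenspace of the involution $w_p$ on $\MSPoly$. The target space $\MSPoly^{U_p=-\epsilon}$ is cut out by the Eichler--Shimura operator $U_p$, and the identification of these two eigenspaces is exactly the content you defer to a ``standard fact'' whose sign you yourself flag as unresolved. That fact is not safe to import: the classical Atkin--Lehner relation on $p$-new cusp forms of weight $2k$ reads $f|U_p=-p^{k-1}f|W_p$, which after the modular-symbol normalization would give $U_p=-w_p$ and hence land $ev_{e_0}(c)$ in the $(+\epsilon)$-eigenspace of $U_p$, the opposite of what the proposition asserts; so either the tree-theoretic $w_p$ used here differs by a sign from the analytic Atkin--Lehner involution, or the relation must be re-derived in this setting. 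Moreover $\MSPoly^{p\text{-}new}$ contains boundary (Eisenstein) symbols to which the cuspidal Atkin--Lehner theory does not directly apply. In short, the proposition's entire content sits in the step you leave unverified.

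The paper closes this gap by a different use of harmonicity: instead of only the edge-reversal relation $c(\bar e_0)=-c(e_0)$, it uses the vertex relation $\sum_{s(e)=v_0}c(e)=0$, which gives $ev_{e_0}(c)\{r,s\}=-\sum_{j=1}^{p}c\{r,s\}(e_j)$ over the remaining $p$ edges at the standard vertex; moving each $e_j$ back to $e_0$ via the matrices $\alpha_j=\left(\begin{smallmatrix}1&j-1\\0&p\end{smallmatrix}\right)$, each of which is $w_p$ times an element of $\Gamma$ (whence the factor $\epsilon$), turns the right-hand side into $-\epsilon\sum_j\bigl(ev_{e_0}(c)\{\alpha_jr,\alpha_js\}\bigr)|\alpha_j$, i.e.\ precisely $-\epsilon$ times the coset sum defining $U_p$ on $\MSPoly$, with the sign coming out correctly and with no appeal to newform theory. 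Your injectivity argument via Lemma \ref{technical lemma} and your $T_\ell$-equivariance argument (the matrices of (\ref{l matrici}) fix $e_0$) are both fine and consistent with how the paper handles those points in Proposition \ref{same def}.
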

\begin{proof}
    The proof is similar to the one of Proposition 3.3 of \cite{DV2}.
    
    For any edge $e$ of the Bruhat-Tits tree, let $U(e)\subset \PQp$ be the $p$-adic ball attached to $e$. Then the complement of $U(e_0)$ is 
    $$U(e_0)^{\prime}=\Z_p=\bigsqcup_{j=1,...,p} U(e_j),
    $$
    where
    $$
    U(e_j):=\alpha_j^{-1}\Z_p, \:\:\:\:\text{ with } \alpha_j:=\begin{pmatrix}1 & j-1\\ 0&p\end{pmatrix}.
    $$
    Let $c\in \MS^\Gamma(C_{har}(\Polys))^{p,\epsilon}$. The harmonicity of $c\{r,s\}$ implies that 
    $$
    (ev_{e_0}(c))\{r,s\}=-\sum_{j=1,...,p} (ev_{e_j}(c))\{r,s\}.
    $$
    From here we can proceed as \cite{DV2}, so the quantity above can be written as
    $$
    -\sum_{j=1,...,p} c\{r,s\}(e_j)=-\sum_{j=1,...,p}((c\{r,s\}|\alpha_j^{-1})(e_0))|\alpha_j=-\epsilon\sum_{j=1,...,p}(c\{\alpha_jr,\alpha_js\}(e_0))|\alpha_j.
    $$
    The result follows from the definition of $U_p$ given in (\ref{second def p}).
\end{proof}

\vspace{4mm}

\begin{proposition}
\label{same def}
    The definition of $T_l$ (resp. $U_p$) given in (\ref{first def}) (resp. (\ref{first def p})) coincides with the one given in (\ref{second def}) (resp. (\ref{second def p})).
\end{proposition}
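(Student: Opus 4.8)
The plan is to treat the operators $U_p$ and $T_l$ separately, since they call for rather different arguments.

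For $U_p$ I would exploit the eigenspace decomposition $\RAC = \RAC^{p,+}\oplus\RAC^{p,-}$ together with Proposition \ref{translation}. Write $J = J^{+} + J^{-}$ with $J^{\epsilon}\in\RAC^{p,\epsilon}$. The explicit definition \eqref{second def p} immediately gives $-J^{\epsilon}|w_p = -\epsilon J^{\epsilon}$. On the other hand, since the isomorphism $\RAC\cong\MS^{\Gamma}(C_{har}(\Polys))$ is by construction compatible with the $p$-even/$p$-odd decomposition, $\Res(J^{\epsilon})$ lies in $\MS^{\Gamma}(C_{har}(\Polys))^{p,\epsilon}$, and Proposition \ref{translation} then places $\Res_0(J^{\epsilon}) = ev_{e_0}(\Res(J^{\epsilon}))$ in $\MSPoly^{U_p=-\epsilon}$. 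Hence $U_p(\Res_0(J^{\epsilon})) = -\epsilon\,\Res_0(J^{\epsilon})$, and applying $\Res_0^{-1}$ (which makes sense by Lemma \ref{technical lemma}, since a scalar multiple stays in $\MSPoly^{p\text{-}new}$) shows that the definition \eqref{first def p} also returns $-\epsilon J^{\epsilon}$. The two definitions agree on each eigenspace, hence by linearity on all of $\RAC$.

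For $T_l$ I would show that $\Res_0$ intertwines the coset-sum operator \eqref{second def} with the standard coset-sum Hecke operator on $\MSPoly$, and then invoke the classical fact that the latter is exactly the operator $T_l$ induced through the Eichler--Shimura isomorphism (see Section 4 of \cite{Stevens}). The first and decisive step is the equivariance claim
\begin{equation*}
\Res_0(J|\gamma) = \Res_0(J)|\gamma \qquad \text{for all } \gamma\in\Gamma_l.
\end{equation*}
Granting this and summing over $\Gamma_l$ gives $\Res_0\big(l^{k-1}\sum_{\gamma\in\Gamma_l} J|\gamma\big) = l^{k-1}\sum_{\gamma\in\Gamma_l}\Res_0(J)|\gamma$, which is precisely $T_l(\Res_0(J))$; applying $\Res_0^{-1}$ then identifies \eqref{second def} with \eqref{first def}. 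To prove the equivariance claim I would use $\Res_0 = ev_{e_0}\circ\Res$ from \eqref{clarity}. The key observation is that each $\gamma\in\Gamma_l$ has entries in $\Z$ and determinant $l$, a unit in $\Z_p$ as $l\neq p$; being upper triangular or diagonal, its reduction modulo $p$ stabilizes the standard flag, so $\gamma$ lies in the stabilizer of the standard edge $e_0$ inside $\PGL_2(\Q_p)$, i.e. $\gamma e_0 = e_0$. Consequently $ev_{e_0}(c|\gamma) = (c|\gamma)(e_0) = c(\gamma e_0)|\gamma = c(e_0)|\gamma = ev_{e_0}(c)|\gamma$, so $ev_{e_0}$ is $\Gamma_l$-equivariant; combined with the equivariance of $\Res$ under these determinant-$l$ matrices this yields the claim.

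I expect the main obstacle to be exactly this last point. The maps $\Res$ and $ST$ are set up to be $\Gamma=\SL_2(\Z[1/p])$-equivariant, whereas the Hecke representatives in $\Gamma_l$ have determinant $l$ and lie outside $\Gamma$. The crux is therefore to verify that the annular-residue construction underlying $\Res$ transforms correctly under these matrices — which, because they sit in $\GL_2(\Z_p)$ and fix $e_0$, it does — so that the $\Gamma$-equivariance of the Schneider--Teitelbaum machinery extends to the relevant Hecke action. Once this naturality is in hand, the remaining identification of the coset-sum operator $l^{k-1}\sum_{\gamma\in\Gamma_l}(-)|\gamma$ with the Eichler--Shimura operator $T_l$ is a standard modular-symbol computation, and everything else is bookkeeping.
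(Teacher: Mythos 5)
Your proposal is correct and follows essentially the same route as the paper: for $T_l$ you use the $\GL_2(\Q_p)$-equivariance of $\Res$ together with the fact that the matrices in $\Gamma_l$ stabilize $e_0$ so that $ev_{e_0}$, and hence $\Res_0=ev_{e_0}\circ\Res$, intertwines the coset-sum operators, and for $U_p$ you use Proposition \ref{translation} to see that $\Res_0$ carries the $w_p$-eigenspace decomposition to the $U_p$-eigenspaces with the sign flip built into (\ref{second def p}). Your write-up merely makes explicit two points the paper leaves implicit (the eigenspace-by-eigenspace check for $U_p$, and the identification of the coset-sum operator on $\MSPoly$ with the Eichler--Shimura-induced $T_l$), which is a reasonable amount of added care rather than a different argument.
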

\begin{proof}
The Schneider-Teitelbaum lift $\ST$ and its inverse $\Res$ are $\GL_2(\Q_p)$-equivariant, and the map $ev_{e_0}$ is equivariant for the matrices $\gamma\in\Gamma_l$ defined in (\ref{l matrici}) as those matrices stabilize $e_0$. This implies that the map $\Res_0=ev_{e_0}\circ \Res$ is $T_l$-equivariant for $T_l$ defined in (\ref{second def}), hence the definition in (\ref{first def}) agrees with the one in (\ref{second def}). Proposition \ref{translation} implies that $\Res_0$ is also equivariant for the operator $U_p$ as defined in (\ref{second def p}), so the result follows.
\end{proof}

\vspace{4mm}


\section{Construction of the Shintani map for rigid cocycles}
\label{Construction of map}
 
\vspace{4mm}

In this Section we will construct a map $\mathcal{RS}:\RAC^{cusp}\rightarrow S_{k+1/2}(\Gamma_{0}(4p),\chi)$, where $\chi$ was defined in (\ref{char}).

We can finally give a rigorous statement of the main theorem.

\vspace{4mm}

\begin{theorem}
\label{main thm}
    Let $J\in\RAC^{cusp}$ be a cuspidal cocycle and let $\chi$ be the character defined in (\ref{char}). Let
    $$
    \hat{I}_k(J,Q):=\langle \widetilde{\mu}_J\{r_Q,s_Q\}(x,y),Q^{k-1}(x,y)\rangle, \quad\text{with } Q\in \Fp.
    $$Then
   $$
    \mathcal{RS}(J):=(2k-2)!\sum_{Q\in\Fp/\Gamma_0(p)}\hat{I}_k(J,Q) q^{D(Q)/p}
    $$
    is a cusp form in $S_{k+1/2}(\Gamma_{0}(4p),\chi)$. Here $q:=e^{2\pi i \tau}$ with $\tau\in\mathcal{H}$. The map
    $$
    \mathcal{RS}: \RAC^{cusp} \rightarrow S_{k+1/2}(\Gamma_{0}(4p),\chi)
    $$
    is $\bar{\Q}$-linear and
    $$
    \mathcal{RS}(T_m(J))=T_{m^2}(\mathcal{RS}(J))
    $$
    for any $m$ coprime with $p$.
\end{theorem}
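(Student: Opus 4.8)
The plan is to reduce the whole statement to Shintani's classical Theorem~\ref{Shintani thm} by using the cuspidality of $J$ in an essential way. Everything in the definition of $\mathcal{RS}(J)$ depends on $J$ only through $\widetilde{\mu}_J = \widetilde{\Res}_0(J)$, and the cuspidality hypothesis says precisely that $\widetilde{\mu}_J \in \mathrm{Im}(\widetilde{s}_\beta)$. Since $\widetilde{s}_\beta = \widetilde{per}\circ \widetilde{ES}^{-1}$ with $\widetilde{ES}$ an isomorphism, we have $\mathrm{Im}(\widetilde{s}_\beta) = \mathrm{Im}(\widetilde{per})$, so there is a cusp form $f \in S_{2k}(\Gamma_0(p))$ (extracted from the corresponding element of $S_{2k}(\Gamma_0(p))\oplus S_{2k}(\Gamma_0(p))$) with $\widetilde{\mu}_J = \widetilde{per}(f)$. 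This is the cusp form I would attach to $J$.

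Next I would substitute this identity directly into the coefficients. Since $\hat{I}_k(J,Q) = \langle \widetilde{\mu}_J\{r_Q,s_Q\}(x,y), Q^{k-1}(x,y)\rangle = \langle \widetilde{per}(f)\{r_Q,s_Q\}(x,y), Q^{k-1}(x,y)\rangle$, the characterization (\ref{similar expression}) of the Shintani period integral gives $\hat{I}_k(J,Q) = I_k(f,Q)/(2k-2)!$. The normalizing factor $(2k-2)!$ in the definition of $\mathcal{RS}$ cancels this denominator, so that $\mathcal{RS}(J) = \sum_{Q\in\Fp/\Gamma_0(p)} I_k(f,Q)\, q^{D(Q)/p} = \mathcal{S}(f)$. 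Theorem~\ref{Shintani thm} then immediately yields that $\mathcal{RS}(J)$ is a cusp form in $S_{k+1/2}(\Gamma_0(4p),\chi)$.

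The $\bar{\Q}$-linearity of $\mathcal{RS}$ follows from that of $\widetilde{\Res}_0$, of the pairing $\langle\,,\,\rangle$ and of the formation of the $q$-expansion, using that cuspidal cocycles lie in $\RACQ$ so the coefficients are algebraic. For Hecke-equivariance I would compose three equivariances. First, $\widetilde{\Res}_0$ is Hecke-equivariant away from $p$: this is built into the definition (\ref{first def}) of $T_m$ on $\RAC$ together with the $\Gamma$-equivariance of $\alpha$, giving $\widetilde{\mu}_{T_m J} = T_m \widetilde{\mu}_J$. Second, the Hecke action on $\MSPoly$ is by construction the one transported from $S_{2k}(\Gamma_0(p))$ through the Eichler-Shimura isomorphism, so $T_m \widetilde{per}(f) = \widetilde{per}(T_m f)$; hence the assignment $J \mapsto f$ intertwines $T_m$ with $T_m$. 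Third, the classical relation $\mathcal{S}(T_m f) = T_{m^2}\mathcal{S}(f)$ of Theorem~\ref{Shintani thm} closes the loop: $\mathcal{RS}(T_m J) = \mathcal{S}(T_m f) = T_{m^2}\mathcal{S}(f) = T_{m^2}\mathcal{RS}(J)$.

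The main obstacle I anticipate is the bookkeeping around the involutions and the extraction of a single holomorphic cusp form. The Eichler-Shimura source is a sum of two copies of $S_{2k}(\Gamma_0(p))$, separated by $w_\infty$ into even and odd modular symbols, whereas the Shintani integral (\ref{similar expression}) only interacts with the odd part (the examples $J_{k,D}$, for which $\Res_0(J_{k,D}) = per^-(f_{k,D}^{(p)})$, already display this). I would therefore need to verify, via the $w_\infty$-symmetry of $\Fp$ and of the Shintani cycles, that the even component of $\widetilde{\mu}_J$ contributes nothing to the generating series, so that only the odd period $\widetilde{per}^-(f)$ of a genuine holomorphic $f$ survives and $\mathcal{RS}(J)=\mathcal{S}(f)$ is legitimate. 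Running in parallel is the need to keep the $|$ and $\star$ conventions consistent through the map $\alpha = {\cdot}\,|\,S$, since both $\widetilde{per} = \alpha\circ per$ and $\widetilde{\mu}_J = \alpha\circ\mu_J$ must be applied on the same side for the pairing with $Q^{k-1}$ to reproduce exactly Shintani's normalization.
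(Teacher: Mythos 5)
Your proposal follows essentially the same route as the paper: cuspidality gives $\widetilde{\mu}_J = \widetilde{per}^+(g_J)+\widetilde{per}^-(f_J)$ for a pair of cusp forms, the characterization (\ref{similar expression}) converts the coefficients $\hat{I}_k(J,Q)$ into Shintani's period integrals, and everything (holomorphy, level, character, Hecke equivariance) reduces to Theorem \ref{Shintani thm}. The ``main obstacle'' you flag at the end --- verifying via the $w_\infty$-symmetry of $\Fp$ and of the Shintani cycles that the even component contributes nothing to the generating series --- is precisely the paper's Proposition \ref{odd cycle}, and once that is in place (note that cuspidality a priori produces a pair $(g_J,f_J)$ rather than the single form $f$ with $\widetilde{\mu}_J=\widetilde{per}(f)$ asserted in your second paragraph, so the even part must be discarded rather than absorbed) your argument coincides with the one given in the paper.
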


\vspace{4mm}

To prove Theorem \ref{main thm} we will need the following result.

\vspace{4mm}

\begin{lemma}
The quantity $\hat{I}_k(J,Q)$ is well defined and depends only on the $\Gamma_0(p)$-orbit of $Q$.
\end{lemma}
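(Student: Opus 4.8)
The plan is to verify two things: that $\hat{I}_k(J,Q)$ does not depend on the auxiliary point $\omega$ entering the Shintani cycle when $D(Q)$ is not a square, and that the value is unchanged when $Q$ is replaced by $Q\star\gamma$ for $\gamma\in\Gamma_0(p)$. Both will follow from two structural facts, which I would record at the outset. First, $\widetilde{\mu}_J=\widetilde{\Res}_0(J)$ is a $\Gamma_0(p)$-invariant modular symbol for the $\star$ action, i.e. $\widetilde{\mu}_J\{\gamma r,\gamma s\}\star\gamma=\widetilde{\mu}_J\{r,s\}$ for $\gamma\in\Gamma_0(p)$; this holds because $\Res_0(J)\in\MSPoly$ and $\alpha$ is $\Gamma$-equivariant. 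Second, the pairing $\langle\,,\,\rangle$ on $\Polys$ is $\SL_2(\Z)$-invariant, so $\langle h_1\star\gamma,h_2\star\gamma\rangle=\langle h_1,h_2\rangle$ for $\gamma\in\SL_2(\Z)$, since it is the standard invariant pairing on $\Polys$. I would also use that $\star$ is a right action and is multiplicative on polynomials, so $(Q\star\gamma)^{k-1}=Q^{k-1}\star\gamma$.

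For independence of the base point I treat the non-square case, where $(r_Q,s_Q)=(\omega,\gamma_Q\omega)$ with $\gamma_Q\in\Gamma_0(p)$ the automorph, so $Q\star\gamma_Q=Q$. Comparing two choices $\omega,\omega'$ and writing $v:=\widetilde{\mu}_J\{\omega',\omega\}$, the additivity and antisymmetry of modular symbols together with $\Gamma_0(p)$-invariance under $\gamma_Q$ give $\widetilde{\mu}_J\{\omega',\gamma_Q\omega'\}-\widetilde{\mu}_J\{\omega,\gamma_Q\omega\}=v-v\star\gamma_Q^{-1}$. Pairing against $Q^{k-1}$, the invariance of the pairing rewrites $\langle v\star\gamma_Q^{-1},Q^{k-1}\rangle$ as $\langle v,Q^{k-1}\star\gamma_Q\rangle$, and since $Q^{k-1}\star\gamma_Q=Q^{k-1}$ the two terms cancel. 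In the square case $(r_Q,s_Q)=(x_Q,y_Q)$ is determined, so nothing is needed.

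For $\Gamma_0(p)$-orbit invariance, set $Q'=Q\star\gamma$ with $\gamma\in\Gamma_0(p)$. I would first check that the automorph conjugates, $\gamma_{Q'}=\gamma^{-1}\gamma_Q\gamma$, which is immediate from $Q\star(\gamma\gamma_{Q'}\gamma^{-1})=Q$ and the fact that $\star$ is a right action, and that the oriented cycle transforms by $(r_{Q'},s_{Q'})=(\gamma^{-1}r_Q,\gamma^{-1}s_Q)$. In the non-square case this is legitimate precisely because the previous paragraph lets me choose the base point for $Q'$ to be $\gamma^{-1}\omega$; in the square case it reduces to checking that the roots $x_{Q'},y_{Q'}$ are the images under $\gamma^{-1}$ of $x_Q,y_Q$ with matching orientation. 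Then $\Gamma_0(p)$-invariance gives $\widetilde{\mu}_J\{\gamma^{-1}r_Q,\gamma^{-1}s_Q\}=\widetilde{\mu}_J\{r_Q,s_Q\}\star\gamma$, and combining with $(Q')^{k-1}=Q^{k-1}\star\gamma$ the invariance of the pairing yields $\hat{I}_k(J,Q')=\langle\widetilde{\mu}_J\{r_Q,s_Q\}\star\gamma,Q^{k-1}\star\gamma\rangle=\hat{I}_k(J,Q)$.

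The main obstacle I anticipate is bookkeeping rather than anything conceptual: correctly tracking how the oriented Shintani cycle transforms under the $\star$ action, including orientation and the degenerate cases ($c=0$ and square discriminant) of the cycle, and making sure that only $\Gamma_0(p)$-invariance is invoked. It is worth emphasizing that $\widetilde{\mu}_J$ is only $\Gamma_0(p)$-invariant, not $\Gamma$-invariant, so the argument genuinely uses $\gamma_Q\in\Gamma_0(p)$ and $\gamma\in\Gamma_0(p)$; this is exactly what forces the dependence to be on the $\Gamma_0(p)$-orbit and no smaller.
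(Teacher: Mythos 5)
Your proposal is correct and follows essentially the same route as the paper: base-point independence via modular-symbol additivity, $\Gamma_0(p)$-invariance under the automorph $\gamma_Q$, and the adjointness of the pairing against $Q^{k-1}=(Q\star\gamma_Q)^{k-1}$; then orbit invariance via $\gamma_{Q\star\gamma}=\gamma^{-1}\gamma_Q\gamma$ and the transformation $(r_{Q\star\gamma},s_{Q\star\gamma})=(\gamma^{-1}r_Q,\gamma^{-1}s_Q)$, with the square and non-square discriminant cases treated separately. The only cosmetic difference is that the paper verifies the square case through Shintani's characterization $(r_Q,s_Q)=(g(\infty),g(0))$ with $Q\star g=\sqrt{D}\,xy$, which handles the orientation bookkeeping you flag more cleanly than checking the roots directly.
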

\begin{proof}
    We will start by proving that $\hat{I}_k(J,Q)$ does not depend on the choice of the point $\omega\in\PQ$ that we used to define it. Let $\wpr\in \PQ$. Then using the properties of $\Gamma_0(p)$-invariant modular symbols we get 
    \begin{eqnarray*}
    \widetilde{\mu}_J\{\omega,\gamma_Q(\omega)\}&=&\widetilde{\mu}_J\{\wpr,\gamma_Q(\wpr)\}+\widetilde{\mu}_J\{\gamma_Q(\wpr),\gamma_Q(\omega)\}-\widetilde{\mu}_J\{\wpr,\omega\}\\
    &=&\widetilde{\mu}_J\{\wpr,\gamma_Q(\wpr)\}+\widetilde{\mu}_J\{\wpr,\omega\}\star\gamma_Q^{-1}-\widetilde{\mu}_J\{\wpr,\omega\}.
    \end{eqnarray*}
    Now note that
    \begin{eqnarray*}
    \langle (\widetilde{\mu}_J\{\wpr,\omega\}\star\gamma_Q^{-1})(x,y),Q^{k-1}(x,y)\rangle&=&\langle \widetilde{\mu}_J\{\wpr,\omega\}(x,y),(Q\star\gamma_Q)^{k-1}(x,y)\rangle\\
    &=&\langle \widetilde{\mu}_J\{\wpr,\omega\}(x,y),Q^{k-1}(x,y)\rangle.
    \end{eqnarray*}
    From this one can immediately conclude that $\hat{I}_k(J,Q)$ does not depend on $\omega$ or $\wpr$. 

    We will now show that $\hat{I}_k(J,Q)=\hat{I}_k(J,Q\star\gamma)$ for any $\gamma\in\Gamma_0(p)$. At first note that $\hat{I}_k(J,Q\star\gamma)$ can be written as
     \begin{eqnarray}
     \label{test1}
    \langle \widetilde{\mu}_J\{r_{Q\star\gamma},s_{Q\star\gamma}\},(Q\star\gamma)^{k-1}(x,y)\rangle&=&\langle (\widetilde{\mu}_J\{r_{Q\star\gamma},s_{Q\star\gamma}\}\star\gamma^{-1})(x,y),Q^{k-1}(x,y)\rangle\\
    \label{test}
    &=&\langle \widetilde{\mu}_J\{\gamma (r_{Q\star\gamma}),\gamma (s_{Q\star\gamma})\}(x,y),Q^{k-1}(x,y)\rangle.
     \end{eqnarray}
     We are going to treat separately the cases in which $D(Q)$ is or is not a square. Let us assume at first that $D(Q)$ is not a square, i.e. $\{r_{Q\star\gamma},s_{Q\star\gamma}\}=\{\omega,\gamma_{Q\star\gamma}(\omega)\}$ for some $\omega\in\PQ$. The automorph of $Q\star\gamma$ is $\gamma_{Q\star\gamma}=\gamma^{-1}\gamma_Q\gamma$, so the pair appearing in (\ref{test}) is $\{\gamma (r_{Q\star\gamma}),\gamma (s_{Q\star\gamma})\}=\{\gamma\omega,\gamma_{Q}(\gamma\omega)\}$. This implies that the quantity in (\ref{test}) is equal to $\hat{I}_k(J,Q)$.

     We will now consider the case in which $D(Q)$ is a square. In this case the Shintani cycle $(r_Q,s_Q)$ is defined as $(g(\infty) ,g(0))$ for $g\in\SL_2(\R)$ such that $(Q\star g)(x,y)=\sqrt{D}xy$ (see \cite{Shintani},page 101). If $g$ is such a matrix, then $(Q\star\gamma)\star(\gamma^{-1}g)(x,y)=\sqrt{D}xy$, hence $(r_{Q\star\gamma},s_{Q\star\gamma})=(\gamma^{-1}g(\infty) ,\gamma^{-1}g(0))=(\gamma^{-1}r_Q, \gamma^{-1}s_Q)$. This fact, together with equations  (\ref{test1}), (\ref{test}) implies that also in this case $\hat{I}_k(J,Q)=\hat{I}_k(J,Q\star\gamma)$.
\end{proof}

\vspace{4mm}

Expression (\ref{similar expression}) implies that we can write $I_k(f,Q)=I_k(f,Q)^+ +I_k(f,Q)^-$, where
$$
I_k(f,Q)^{\pm}=(2k-2)!\langle (\widetilde{per}^{\pm}(f)\{r_{Q},s_{Q}\})(x,y),Q^{k-1}(x,y)\rangle.
$$
We will now prove that the even part $I_k(f,Q)^{+}$ of $I_k(f,Q)$ does not contribute to the Shintani lift $\mathcal{S}(f)$.

\vspace{4mm}

\begin{proposition}
\label{odd cycle}
    Let $f\in S_{2k}(\Gamma_{0}(p))$. Then 
    $$
  \sum_{Q\in\Fp/\Gamma_0(p)}I_k(f,Q)^+q^{D(Q)/p}=0,
    $$
    and
    $$
    \mathcal{S}(f)=\sum_{Q\in\Fp/\Gamma_0(p)}I_k(f,Q)^-q^{D(Q)/p}.
    $$
\end{proposition}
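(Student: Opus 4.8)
The goal is to show that the even part of the Shintani integral contributes nothing to the generating series, which will be done by pairing up forms $Q$ and $-Q$ in $\Fp$ and exploiting the interaction of $w_\infty$ with the pairing $\langle\,,\,\rangle$. The plan is to use the involution $Q\mapsto Q\star w_\infty$ (equivalently $Q\mapsto -Q$ up to the action of $w_\infty=\mm{-1}001$) to set up a sign-reversing involution on the index set $\Fp/\Gamma_0(p)$, and to check that it fixes the monomial $q^{D(Q)/p}$ while negating the even-period contribution $I_k(f,Q)^+$. First I would record how $w_\infty$ acts on $\Fp$: since $w_\infty$ has determinant $-1$ it does not lie in $\Gamma_0(p)$, but $Q\mapsto Q\star w_\infty$ preserves $\Fp$ (it fixes $a$, negates $b$, fixes $c$, so the congruence conditions $p\nmid a$, $p\mid b$, $p\mid c$ and the discriminant $D(Q)$ are all preserved), and it induces a well-defined involution on $\Fp/\Gamma_0(p)$ because $w_\infty$ normalizes $\Gamma_0(p)$.

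Next I would track the effect of this involution on the coefficient $I_k(f,Q)^+$. The key computation is to relate the Shintani cycle $C_{Q\star w_\infty}$ to $w_\infty(C_Q)$ and to move the $w_\infty$ through the pairing. Using the definition of the $w_\infty$-involution on modular symbols, $(\widetilde{per}^{+}(f)\mid w_\infty)\{r,s\}=(\widetilde{per}^{+}(f)\{w_\infty r,w_\infty s\})\mid w_\infty$, together with the fact that $\widetilde{per}^{+}(f)$ lies in the $+1$-eigenspace for $w_\infty$ (this is exactly what the superscript $+$ means after passing to the $\star$ action via $\alpha$), I would show that replacing $Q$ by $Q\star w_\infty$ in $I_k(f,Q)^+$ reproduces $I_k(f,Q)^+$ up to a sign. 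The sign should come out to $-1$ from the determinant $-1$ of $w_\infty$ entering the weight-$2k$ (equivalently degree-$2k-2$) action on $Q^{k-1}$, or from the combinatorics of how $w_\infty$ permutes the basis monomials against the antidiagonal pairing $\langle X^iY^{2k-2-i},X^jY^{2k-2-j}\rangle={2k-2\choose i}^{-1}(-1)^i\delta_{i,2k-2-j}$. Granting this, the terms for $Q$ and $Q\star w_\infty$ in $\sum_Q I_k(f,Q)^+ q^{D(Q)/p}$ cancel in pairs, giving the first displayed identity; the second then follows immediately from the decomposition $I_k(f,Q)=I_k(f,Q)^++I_k(f,Q)^-$ and the definition of $\mathcal{S}(f)$ in Theorem~\ref{Shintani thm}.

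There is one subtlety I would handle with care, which I expect to be the main obstacle: the involution $Q\mapsto Q\star w_\infty$ could in principle have fixed points in $\Fp/\Gamma_0(p)$, and at such fixed points the pairing argument must force $I_k(f,Q)^+=0$ directly rather than through cancellation. A form $Q$ is fixed (up to $\Gamma_0(p)$) exactly when $w_\infty$ lies in $\Gamma_0(p)\cdot\gamma_Q^{\Z}$ in the appropriate sense; at such $Q$ the sign computation above shows $I_k(f,Q)^+$ equals its own negative, hence vanishes, so the argument still goes through. The other place demanding attention is the bookkeeping of the $\pm$ decomposition under the transfer $\widetilde{per}^{\pm}=\alpha\circ per^{\pm}$ between the $\mid$ and $\star$ conventions, since $\alpha=(\,\cdot\,)\mid S$ with $S=\mm{0}1{-1}0$ interacts nontrivially with $w_\infty$; I would verify that the eigenspace labels $+/-$ are preserved (or consistently swapped) under $\alpha$ so that "even period" really does mean the $+1$-eigenspace in the $\star$ picture used to define $I_k(f,Q)^+$. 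Once these two points are pinned down, the proof is a short involution-and-cancellation argument.
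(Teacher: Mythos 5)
Your overall strategy is the same as the paper's: decompose $I_k(f,Q)=I_k(f,Q)^++I_k(f,Q)^-$ via the $w_\infty$-involution, pair $Q$ with $Q':=Q|w_\infty$ (which preserves $\Fp$ and $D(Q)$ and descends to $\Fp/\Gamma_0(p)$), and show that the even contributions cancel in pairs because $I_k(f,Q')^+=-I_k(f,Q)^+$; the second identity then follows formally, exactly as you say.

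However, you have mislocated the crucial sign, and both of the sources you propose for it actually yield $+1$, so the step as you describe it would fail. The action of $w_\infty=\mm{-1}{0}{0}{1}$ on $\Polys$ relevant here carries no determinant twist (the $\det^{1-k}$ factor appears only in the Hecke-operator normalization of Section 2), so $Q^{k-1}$ is simply sent to $(Q')^{k-1}$ with no sign; and the antidiagonal pairing is invariant under the simultaneous action of $w_\infty$, since the monomials $X^iY^{2k-2-i}$ and $X^{2k-2-i}Y^{i}$ pick up signs whose product is $(-1)^{2k-2}=+1$. The sign in the paper's proof comes instead from the geometry of the Shintani cycle: one checks from its definition that $\{-r_Q,-s_Q\}=\{s_{Q'},r_{Q'}\}$, i.e.\ $w_\infty$ carries $C_Q$ to $C_{Q'}$ \emph{with reversed orientation}, and the antisymmetry $m\{s,r\}=-m\{r,s\}$ of modular symbols then supplies the $-1$, giving $I_k(f,Q)^{\pm}=\tfrac{1}{2}\bigl(I_k(f,Q)\mp I_k(f,Q')\bigr)$. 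Your worry about fixed points is legitimate but resolves itself: the cancellation is carried out by reindexing the full sum over $\Fp/\Gamma_0(p)$ under $Q\mapsto Q'$, and at a fixed class the displayed formula gives $I_k(f,Q)^+=\tfrac{1}{2}\bigl(I_k(f,Q)-I_k(f,Q)\bigr)=0$ directly.
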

\begin{proof}
    Recall that $\tI=\left(\begin{smallmatrix} -1 & 0\\ 0 & 1\end{smallmatrix}\right)$ and let $Q^{\prime}:=Q|\tI$ for $Q\in\Fp$. Then 
    $$
    \widetilde{per}^{\pm}(f)=\frac{1}{2}\Big(\widetilde{per}(f)\:\pm\: \widetilde{per}(f)|\tI\Big),
    $$
    and
    $$
    I_k(f,Q)^{\pm}=\frac{1}{2}\Big(I_k(f,Q)\:\pm\: (2k-2)!\langle (\widetilde{per}(f)\{-r_{Q},-s_{Q}\})(x,y),{Q^{\prime}}^{k-1}(x,y)\rangle\Big).
    $$
    Using the definition of the Shintani cycle one can show that $\{-r_{Q},-s_{Q}\}=\{s_{Q^{\prime}},r_{Q^{\prime}}\}$, hence
    $$
    I_k(f,Q)^{\pm}=\frac{1}{2}\Big(I_k(f,Q) \:\mp\: I_k(f,Q^{\prime})\Big).
    $$
    So $I_k(f,Q)^+$ does not affect the sum giving $\mathcal{S}(f)$, as the contributions from $I_k(f,Q)$ and $I_k(f,Q^{\prime})$ cancel out.
\end{proof}

\vspace{4mm}

Now we can finally prove our main result.

\begin{proof}(Proof of Theorem \ref{main thm})

    As the cocycle $J$ is cuspidal, there are $(g,f)\in S_{2k}(\Gamma_0(p))\oplus S_{2k}(\Gamma_0(p))$ such that $\mu_J=per^+(g_J)+per^-(f_J)$, or equivalently $\widetilde{\mu}_J=\widetilde{per}^+(g_J)+\widetilde{per}^-(f_J)$. By Theorem \ref{Shintani thm} and Proposition \ref{odd cycle} we have
    $$
    \mathcal{RS}(J)=\mathcal{S}(f_J)
    $$
    and the result follows.
\end{proof}

\vspace{4mm}

For $J$ a cuspidal cocycle in $\RACQ$, write $\widetilde{\mu}_J=\widetilde{\mu}_J^++\widetilde{\mu}_J^-$ as a sum of an even and an odd modular symbol. Then the proof above has the following consequences.

\vspace{4mm}

\begin{corollary}
    Let $$
    \mathcal{RS}^{\pm}(J):=(2k-2)!\sum_{Q\in\Fp/\Gamma_0(p)}\langle \widetilde{\mu}_J^{\pm}\{r_Q,s_Q\}(x,y),Q(x,y)^{k-1}\rangle q^{D(Q)/p}.
    $$
    Then
    $$
    \mathcal{RS}(J)=\mathcal{RS}^{-}(J)\:\:\:\text{ and }\:\:\:\mathcal{RS}^{+}(J)=0.
    $$
\end{corollary}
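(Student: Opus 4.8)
The plan is to read off both identities directly from the decomposition used in the proof of Theorem \ref{main thm}, rather than to set up a fresh computation. The key observation is that the even/odd splitting $\widetilde{\mu}_J=\widetilde{\mu}_J^++\widetilde{\mu}_J^-$ coincides, for a cuspidal cocycle, with the splitting $\widetilde{\mu}_J=\widetilde{per}^+(g_J)+\widetilde{per}^-(f_J)$ established there: indeed $\widetilde{per}^\pm$ lands in the $\pm 1$ eigenspace of $w_\infty$ by the Eichler--Shimura discussion of Section \ref{Eichler-Shimura and cuspidal cocycles}, so $\widetilde{\mu}_J^+=\widetilde{per}^+(g_J)$ and $\widetilde{\mu}_J^-=\widetilde{per}^-(f_J)$. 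Thus $\mathcal{RS}^{\pm}(J)$ is, by definition and by the characterization \eqref{similar expression} of the pairing, exactly the generating series $\sum_Q I_k(\cdot,Q)^{\pm}q^{D(Q)/p}$ built from the even/odd period symbols of $g_J$ and $f_J$ respectively.

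First I would make this matching of the two decompositions explicit, citing the fact $\widetilde{per}^\pm=\alpha\circ per^\pm$ and the fact that $w_\infty$-eigenspaces are preserved by $\alpha$, so that comparing the two ways of writing $\widetilde{\mu}_J$ forces the eigencomponents to agree term by term. Next I would apply Proposition \ref{odd cycle} to each cusp form separately. For $\mathcal{RS}^+(J)$, substituting $\widetilde{\mu}_J^+=\widetilde{per}^+(g_J)$ into the defining sum produces precisely $\sum_{Q}I_k(g_J,Q)^+q^{D(Q)/p}$, which Proposition \ref{odd cycle} shows to vanish; hence $\mathcal{RS}^+(J)=0$. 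For $\mathcal{RS}^-(J)$, the same substitution with $f_J$ gives $\sum_Q I_k(f_J,Q)^-q^{D(Q)/p}$, which by the second assertion of Proposition \ref{odd cycle} equals $\mathcal{S}(f_J)$, and by the proof of Theorem \ref{main thm} this is $\mathcal{RS}(J)$.

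I do not expect a serious obstacle here, since the corollary is essentially a bookkeeping consequence of two results already in hand. The one point requiring genuine care is the compatibility of the two decompositions of $\widetilde{\mu}_J$: a priori the abstract $w_\infty$-eigenspace splitting of the modular symbol and the splitting coming from the Eichler--Shimura period maps are two different statements, and one must verify they are the same splitting rather than merely sum to the same total. This follows because both are eigendecompositions for the single involution $w_\infty$ acting on $\MSPolyC$, and eigenspace decompositions for an involution are unique; once this is noted, the rest is immediate.
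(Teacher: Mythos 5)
Your proposal is correct and follows essentially the same route as the paper, which states this corollary as an immediate consequence of the proof of Theorem \ref{main thm}: there one writes $\widetilde{\mu}_J=\widetilde{per}^+(g_J)+\widetilde{per}^-(f_J)$ and invokes Proposition \ref{odd cycle}, exactly as you do. Your added remark that the $w_\infty$-eigenspace splitting of $\widetilde{\mu}_J$ must be identified with the period-map splitting (which holds since $\widetilde{per}^{\pm}$ lands in the $\pm$-eigenspace and eigendecompositions for an involution are unique) is a worthwhile detail that the paper leaves implicit.
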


\vspace{4mm}

\begin{corollary}
Let $\widetilde{s_{\beta}}^{-}$ be the map given by composing $\widetilde{s}_{\beta}\circ \widetilde{Res}_0$ with the projection on the second component of $S_{2k}(\Gamma_0(p))\oplus S_{2k}(\Gamma_0(p))$. Then $\mathcal{RS}$ factors through $S_{2k}(\Gamma_0(p))$ via $\widetilde{s_{\beta}}^{-}$.
\end{corollary}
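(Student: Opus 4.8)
The plan is to read the factorization straight off the proof of Theorem \ref{main thm}, so that essentially no new computation is needed; the only genuine work is to match the notation defining $\widetilde{s_\beta}^{-}$ with the pair $(g_J, f_J)$ produced in that proof. Concretely, I want to establish the two identities $\widetilde{s_\beta}^{-}(J) = f_J$ and $\mathcal{RS}(J) = \mathcal{S}(f_J)$, and then simply compose them to get $\mathcal{RS} = \mathcal{S}\circ \widetilde{s_\beta}^{-}$, which exhibits $\mathcal{RS}$ as a map factoring through $S_{2k}(\Gamma_0(p))$.

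First I would recall the cuspidal mechanism. Since $J$ is cuspidal, $\widetilde{\Res}_0(J) = \widetilde{\mu}_J$ lies in $Im(\widetilde{s}_\beta)$, and the Manin--Drinfeld decomposition $\MSPolyC = Ker(\beta)\oplus Im(\widetilde{s}_\beta)$ together with $\widetilde{s}_\beta = \widetilde{per}\circ\widetilde{ES}^{-1}$ produces a unique pair $(g_J, f_J)\in S_{2k}(\Gamma_0(p))\oplus S_{2k}(\Gamma_0(p))$ with $\widetilde{\mu}_J = \widetilde{per}^+(g_J)+\widetilde{per}^-(f_J)$. The proof of Theorem \ref{main thm}, through Proposition \ref{odd cycle}, already records that only the odd part survives, so $\mathcal{RS}(J) = \mathcal{S}(f_J)$. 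Next I would unwind $\widetilde{s_\beta}^{-}$: applying $\beta$ to $\widetilde{\mu}_J$ and using the commutativity $\beta\circ\widetilde{per} = \widetilde{ES}$ of the Eichler--Shimura diagram gives $\beta(\widetilde{\mu}_J) = \widetilde{ES}(g_J, f_J)$, whence $\widetilde{ES}^{-1}(\beta(\widetilde{\mu}_J)) = (g_J, f_J)$. Because $\widetilde{s}_\beta$ is a section of $\beta$, the map $\widetilde{ES}^{-1}\circ\beta$ inverts it on $Im(\widetilde{s}_\beta)$, so the composite $\widetilde{s}_\beta\circ\widetilde{\Res}_0$ recovers $(g_J, f_J)$ and the projection onto the second factor yields $\widetilde{s_\beta}^{-}(J) = f_J$. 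The two identities then combine to give the claimed factorization.

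The main obstacle here is purely bookkeeping rather than mathematics: the symbol $\widetilde{s}_\beta$ is literally the section landing inside $\MSPolyC$, so I must be careful that the phrase \enquote{$\widetilde{s}_\beta\circ\widetilde{\Res}_0$ followed by the projection to the second component} is read as the inverse identification $Im(\widetilde{s}_\beta)\xrightarrow{\sim} S_{2k}(\Gamma_0(p))\oplus S_{2k}(\Gamma_0(p))$ supplied by $\widetilde{ES}^{-1}\circ\beta$. I also need the even/odd conventions baked into $\widetilde{per}^{\pm}$ and into the ordering $per = per^+\oplus per^-$ to line up, so that the second component is exactly the odd piece $f_J$ and not $g_J$; otherwise the factorization would attach $\mathcal{S}$ to the wrong summand. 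Once these identifications are fixed, the statement is immediate from the proof of Theorem \ref{main thm}.
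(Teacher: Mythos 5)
Your proposal is correct and follows essentially the same route as the paper, which states this corollary as an immediate consequence of the proof of Theorem \ref{main thm}: that proof already produces the pair $(g_J,f_J)$ with $\widetilde{\mu}_J=\widetilde{per}^+(g_J)+\widetilde{per}^-(f_J)$ and $\mathcal{RS}(J)=\mathcal{S}(f_J)$, and your identification $\widetilde{s_{\beta}}^{-}(J)=f_J$ via $\widetilde{ES}^{-1}\circ\beta$ is the right way to make the paper's loose phrasing of ``$\widetilde{s}_{\beta}\circ\widetilde{\Res}_0$ followed by projection'' typecheck. Your attention to the even/odd ordering of the components is exactly the bookkeeping the paper leaves implicit.
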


\vspace{4mm}

\begin{example}
We can apply our main theorem to the rigid cocycle $J_{k,D}$ of equation (\ref{Rig shimura kernel}). It was shown in \cite{PhD} that $\mu_{J_{k,D}}=per^-(f_{k,D}^{(p)})$, where $f_{k,D}^{(p)}\in S_{2k}(\Gamma_0(p))$ is a level $p$ analogue of the Zagier form $f_{k,D}$ of equation (\ref{shimura kernel}) (see \cite{PhD}, Section 5). Hence $\mathcal{RS}(J_{k,D})=\mathcal{S}(f_{k,D}^{(p)})$.
\end{example}

\vspace{4mm}

\end{document}